\DeclareMathOperator{\sinc}{sinc}
\newtheorem{mylem}{Lemma}[section]
\newtheorem{mythe}{Theorem}[section]
\newtheorem{cor}{Corollary}[section]
\newtheorem{rem}{Remark}[section]
\title{Behaviour of $L_{q}$ norms of the $\sinc_{p}$ function}
\author[1]{David E Edmunds}
\author[2]{Houry Melkonian}
\affil[1]{{\small \it Department of Mathematics, University of Sussex, Brighton BN1 9QH, UK.}}
\affil[2]{{\small \it Department of Mathematics, School of Mathematical and Computer Sciences, Heriot-Watt University, Edinburgh EH14 4AS, UK.}}
\date{April 2018}
\begin{document}
\maketitle
\footnotetext[1]{Email address: \texttt{davideedmunds@aol.com}}
\footnotetext[2]{Email address: \texttt{hm189@hw.ac.uk}}

\begin{abstract}
An integral inequality due to Ball involves the $L_{q}$ norm of
the $\sinc_p$ function; the dependence of this norm on $q$ as $q\rightarrow
\infty$ is now understood. By use of recent inequalities involving $p-$%
trigonometric functions $(1<p<\infty )$ we obtain asymptotic information
about the analogue of Ball's integral when $\sin $ is replaced by $\sin
_{p}.$
\end{abstract}
\textbf{Mathematics Subject Classification.} Primary 33F05; Secondary 42A99.\\
\textbf{Keywords.} $p$-Ball's integral inequality, generalised trigonometric functions, $p$-Laplacian operator, $p$-sinc function, asymptotic expansion. 

\section{Introduction}
In \cite{ball1986cube}, K. Ball proved that every section of the unit cube in $\mathbb{R}^n$ by an $(n-1)-$dimensional subspace has $(n-1)-$volume at most $\sqrt{2}$, which is attained if and only if this section contains an $(n-2)-$dimensional face of the cube. To show this, Ball made essential use of the inequality
\[
\sqrt{q}\int_{-\infty}^\infty \left\vert\frac{\sin x}{x}\right\vert^q \mathrm{d}x \leq \sqrt{2} \ \pi, \qquad q \geq 2,
\]
in which equality holds if and only if $q=2$.

It is now known (see \cite{borwein2010p}) that
\begin{align}\label{ballsineq}
\displaystyle\lim_{q \rightarrow \infty}\sqrt{q}\int_{-\infty}^\infty \left\vert\frac{\sin x}{x}\right\vert^q \mathrm{d}x = \sqrt{\frac{3\pi}{2}}.
\end{align}
Moreover, the asymptotic properties of the $q$-norm of the sinc function were studied. In fact, more precise results of an asymptotic nature of the integral in Ball's integral inequality are now known (see \cite{kerman2015asymptotically} for more details).

Stimulated by applications to such differential operators as the p-Laplacian, there is now a large amount of recent work concerning generalisations of the sine function (and other trigonometric functions): see, for example, \cite{EdmundsLang2011}. This encourages us to look for an extension of \eqref{ballsineq} to a more general setting. With this in mind, for $q \in (1, \infty)$, define
\[
I_p(q):= q^{1/p}\int_0^\infty \left\vert \frac{\sin_p x}{x} \right\vert^q \mathrm{d}x
\]
where $p \in (1, \infty)$ and $\sin_p$ is the generalised sine function.  This function is defined to be the inverse of the function $F_p:[0, 1] \longrightarrow [0,\frac{\pi_p}{2}]$ given by
\begin{equation} \label{inversepsine}
 F_p(y):=\int_0^y (1-t^p)^{-\frac{1}{p}}\mathrm{d}t,
\end{equation}
where $\pi_p:=2F_p(1)=\frac{2 \pi}{p \sin(\frac{\pi}{p})}$; it is increasing on $[0,\frac{\pi_p}{2}]$ and is extended to the whole of $\mathbb{R}$ to be a $2\pi_p-$periodic function (still denoted by $\sin_p$) by means of the rules
\begin{equation} \label{sineodd}
    \sin_p(-x)=-\sin_p(x) \qquad \text{and} \qquad \sin_p\left(\frac{\pi_p}{2}-x\right)=\sin_p\left(\frac{\pi_p}{2}+x\right).
\end{equation}
The choice $p=2$ corresponds to the standard trigonometric setting: $\sin_2\equiv \sin$, $\pi_2=\pi$. Moreover, $\pi_p$ is a decreasing function in $p \in (1,\infty)$ such that
\begin{align}\label{piplimits}
\begin{cases}
\pi_p \to \infty& \qquad \text{when} \qquad p \to 1^+\\
\pi_p\to2 &\qquad \text{when} \qquad p\to \infty.
\end{cases}
\end{align}
%There is also a connection between the generalised trigonometric functions and those of the classical analysis such as,
%\[
%\sin_p^{-1} x=x \ \pFq{2}{1}{1/p,1/p}{1+1/p}{x^p}, \qquad x \in [0, 1]
%\]
%which yields to a power series representation of the $\sin_p^{-1} x$ as
%\begin{align}\label{sinpinverseseries}
%\sin_p^{-1} x=x\sum_{n=0}^\infty \frac{\Gamma(n+1/p)}{\Gamma(1/p)(np+1)}\frac{x^{np}}{n!}, \qquad x \in [0,1]
%\end{align}
%(see \cite{EdmundsLang2011} and \cite{BushellEdmunds2012} for more information).

The main purpose of this paper is to show that for each $%
p\in (1,\infty )$ there is an analogue of \eqref{ballsineq}, namely%
\[
\lim_{q\rightarrow \infty }q^{1/p}\int_{0}^{\infty }\left\vert \frac{\sin
_{p}x}{x}\right\vert ^{q}dx=p^{-1}\left( p\left( p+1\right) \right)
^{1/p}\Gamma \left( 1/p\right) .
\]%
This is achieved by appropriate use of certain recently-obtained
inequalities concerning $\sin _{p}.$ Moreover, information is obtained about
the asymptotic behaviour of the above integral as $q\rightarrow \infty ;$
this complements that known when $p=2.$

\section{Properties of the $\sinc_p$ function}\label{sec2}
Given $p\in (1,\infty ),$ the function sinc$_{p}$ is defined by 
\[
\text{sinc}_{p}x=\left\{ 
\begin{array}{cc}
\frac{\sin _{p}x}{x}, & x\in \mathbb{R}\backslash \{0\}, \\ 
1, & x=0.%
\end{array}%
\right. 
\]%
It is even and its roots are the points $n\pi _{p}$ with $n\in \mathbb{Z}%
\backslash \{0\}.$ Since $\left\vert \sin _{p}x\right\vert \leq 1$ for all $%
x\in \mathbb{R},$ $\lim_{\left\vert x\right\vert \rightarrow \infty }$sinc$%
_{p}x=0.$

\begin{mylem}\label{sincpmonotonic}
\begin{enumerate}[(a)]
\item \label{1a} $\vert \sinc_p x \vert\leq1$ for all $x \in \mathbb{R}$.
\item \label{1b} The function $\sinc_p $ is strictly decreasing on the interval $\left(0, \frac{\pi_p}{2}\right)$.
\end{enumerate}
\end{mylem}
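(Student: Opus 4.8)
\textit{Proof plan.} For part \eqref{1a}, since $\sinc_p$ is even it suffices to treat $x>0$, and since $|\sin_p x|\le 1$ the inequality $|\sinc_p x|\le 1$ is immediate once $x\ge 1$; as $\pi_p>2$ for every $p\in(1,\infty)$ by \eqref{piplimits}, so that $\pi_p/2>1$, this already disposes of all $x\ge \pi_p/2$. On the remaining interval $(0,\pi_p/2)$ the plan is to exploit the defining relation $x=F_p(\sin_p x)$. Writing $y=\sin_p x\in[0,1)$ and using \eqref{inversepsine},
\[
x=F_p(y)=\int_0^y (1-t^p)^{-1/p}\,\mathrm{d}t\ \ge\ \int_0^y 1\,\mathrm{d}t=y=\sin_p x,
\]
because the integrand satisfies $(1-t^p)^{-1/p}\ge 1$ for $t\in[0,1)$. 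Hence $0\le \sin_p x\le x$, i.e. $0\le \sinc_p x\le 1$, which combined with evenness gives \eqref{1a} on all of $\mathbb{R}$. This part presents no real difficulty.

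For part \eqref{1b} I would work on $(0,\pi_p/2)$, where $\sin_p$ is smooth because it is the inverse of $F_p$, whose derivative $F_p'(y)=(1-y^p)^{-1/p}$ is finite and nonzero for $y\in[0,1)$. Differentiating $F_p(\sin_p x)=x$ yields $\sin_p'x=(1-\sin_p^p x)^{1/p}$; set $\cos_p x:=\sin_p'x$, so that $\sin_p^p x+\cos_p^p x=1$ on this interval and $\cos_p x\in(0,1)$ there. A direct differentiation then gives
\[
\frac{\mathrm{d}}{\mathrm{d}x}\,\sinc_p x=\frac{x\cos_p x-\sin_p x}{x^2}=-\frac{g(x)}{x^2},\qquad g(x):=\sin_p x-x\cos_p x,
\]
so the claim reduces to showing $g(x)>0$ for $x\in(0,\pi_p/2)$.

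To establish $g>0$ I would note $g(0)=0$ and compute $g'(x)=-x\,\cos_p'x$. Differentiating the identity $\sin_p^p x+\cos_p^p x=1$ gives $\cos_p'x=-\sin_p^{\,p-1}x\,\cos_p^{\,2-p}x$, whence
\[
g'(x)=x\,\sin_p^{\,p-1}x\,\cos_p^{\,2-p}x>0\qquad\text{on }(0,\pi_p/2),
\]
since each factor is strictly positive there. Thus $g$ is strictly increasing from $g(0)=0$, so $g>0$ on $(0,\pi_p/2)$ and $\sinc_p$ is strictly decreasing, proving \eqref{1b}. The only delicate points are the justification of smoothness via the inverse-function theorem and the sign bookkeeping in the derivative of $\cos_p$; I expect the latter --- confirming that $\cos_p$ decreases, equivalently $\cos_p'<0$ --- to be the main thing to get right, everything else being routine calculus.
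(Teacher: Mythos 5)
Your proof is correct, and it diverges from the paper's in one noteworthy place. For part \eqref{1a} the paper simply invokes the $p$-analogue of Jordan's inequality, $\tfrac{2}{\pi_p}\le \sinc_p x<1$ on $\left(0,\tfrac{\pi_p}{2}\right]$, citing Bushell--Edmunds, and then uses $|\sin_p x|\le 1$ together with $\pi_p>2$ for the rest of the line; your argument instead derives $\sin_p x\le x$ directly from the defining relation $x=F_p(\sin_p x)$ and the pointwise bound $(1-t^p)^{-1/p}\ge 1$. That one-line integral estimate makes part \eqref{1a} self-contained at no extra cost, whereas the paper's citation also delivers the lower bound $\tfrac{2}{\pi_p}$ (which, incidentally, is what Lemma \ref{alphavanish} later uses in the form $\sinc_p x>0$ on $(\alpha,1)$ --- your version still supplies positivity, so nothing downstream is lost). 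For part \eqref{1b} the two arguments are essentially the same calculation in different clothing: the paper writes $\frac{\mathrm{d}}{\mathrm{d}x}\sinc_p x=\frac{\cos_p x}{x^2}(x-\tan_p x)$ and shows $x-\tan_p x$ decreases using $\frac{\mathrm{d}}{\mathrm{d}x}\tan_p x=1+\tan_p^p x$, while you keep the numerator as $\sin_p x-x\cos_p x$ and show it increases from $0$ using $\cos_p'x=-\sin_p^{p-1}x\,\cos_p^{2-p}x$; the two auxiliary functions differ by the positive factor $\cos_p x$, so the sign bookkeeping is equivalent and both identities are standard for the $p$-trigonometric functions. Your version avoids introducing $\tan_p$ at all, which is a mild simplification; otherwise the content is identical.
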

\begin{proof}
{\itshape{(\ref{1a})}} Observe that for $x \in \left(0, \frac{\pi_p}{2}\right]$ we have the $p$-analogue of the classical Jordan inequality \cite[Proposition 2.3]{BushellEdmunds2012},
\begin{align}\label{pjordan}
\frac{2}{\pi_p} \leq \sinc_p x < 1 \qquad \forall x \in\left(0, \frac{\pi_p}{2}\right].
\end{align}
On the other hand, for $x \in \left(\frac{\pi_p}{2}, \infty\right)$ and since $\pi_p \in (2, \infty)$ for all $p \in (1, \infty)$ we conclude that
\[
\vert \sinc_p x\vert=\left\vert \frac{\sin_p x}{x} \right\vert\leq \frac{1}{\vert x \vert}<1.
\]
Since $\sinc_p$ is an even function, {\itshape{(\ref{1a})}} is complete.
\\
{\itshape{(\ref{1b})}} Observe that
\[
\frac{\mathrm{d}}{\mathrm{d}x}\sinc_p x=\frac{\cos_p x}{x^2}(x-\tan_p x)
\]
and $\cos_p x>0$ for any $x \in (0, \frac{\pi_p}{2})$. Let $g(x)=x-\tan_p x$. Then, $g'(x)=-\tan_p^p x<0$ for all $x \in (0, \frac{\pi_p}{2})$ and we conclude that $g(x)$ is strictly decreasing in this interval. Then $g(x)<g(0)=0$ for $x \in (0, \frac{\pi_p}{2})$. The result follows.
\end{proof}
\section{The $p$-version of Ball's integral inequality}\label{sec3}
For $p, q \in (1, \infty)$ define
\[
I_p(q):= q^{1/p}\int_0^\infty \left\vert\frac{\sin_p x}{x}\right\vert^q \mathrm{d}x.
\]
Here we establish the existence of $\displaystyle\lim_{q \rightarrow \infty} I_p(q)$. We use Laplace's method for integrals which suggests approximating the integrand in a neighbourhood by simpler functions for which the integral can be evaluated after proving that the integral on the complementing interval is very small when $q$ is large enough. 
\begin{mylem}\label{alphavanish}
Let $p, q \in (1, \infty)$. For any real $\alpha>0$, we have
\[
\int_\alpha^\infty \left\vert \frac{\sin_p x}{x}\right\vert^q \mathrm{d}x \leq  \begin{cases}
\frac{1}{q-1}\alpha^{1-q}& \quad \text{for} \qquad \alpha \geq 1,\\\\
\left(\frac{\sin_p \alpha}{\alpha}\right)^q(1-\alpha)+\frac{1}{q-1}& \quad \text{for} \qquad \alpha<1.
\end{cases}
\] 
Moreover,
\[
\displaystyle\lim_{q \rightarrow \infty} q^{1/p} \int_\alpha ^\infty \left\vert \frac{\sin_p x}{x} \right\vert^q \mathrm{d}x=0, \qquad \forall \alpha>0.
\]
\end{mylem}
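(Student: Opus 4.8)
The plan is to split the argument into the two regimes $\alpha \geq 1$ and $\alpha < 1$, exploiting in each case the two parts of Lemma~\ref{sincpmonotonic}, and then to feed the resulting explicit bounds into the limit.

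For the explicit estimates I would start from the elementary consequence of part (a) of Lemma~\ref{sincpmonotonic}, namely $|\sin_p x| \leq 1$, which gives $\left\vert \frac{\sin_p x}{x}\right\vert^q \leq x^{-q}$ for every $x>0$. When $\alpha \geq 1$ this single inequality suffices: integrating $x^{-q}$ over $[\alpha,\infty)$ yields $\frac{1}{q-1}\alpha^{1-q}$, which is the first case. When $\alpha < 1$ I would split the integral at $x=1$. On $[1,\infty)$ the same $x^{-q}$ bound contributes $\frac{1}{q-1}$. On the remaining interval $[\alpha,1]$ I would invoke part (b) of Lemma~\ref{sincpmonotonic}: since $\pi_p > 2$ (from \eqref{piplimits} together with the monotonicity of $\pi_p$), we have $1 < \frac{\pi_p}{2}$, so $\sinc_p$ is strictly decreasing and positive on $[\alpha,1]$ and therefore attains its maximum there at $x=\alpha$. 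This gives $\int_\alpha^1 \leq \left(\frac{\sin_p \alpha}{\alpha}\right)^q (1-\alpha)$, and adding the two contributions produces the second case.

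For the limit I would multiply these bounds by $q^{1/p}$ and let $q\to\infty$. The recurring prefactor $\frac{q^{1/p}}{q-1}$ tends to $0$ precisely because $1/p < 1$, so any bounded quantity it multiplies vanishes. When $\alpha\geq 1$, the term $q^{1/p}\alpha^{1-q}/(q-1)$ tends to $0$: for $\alpha>1$ the factor $\alpha^{1-q}$ decays exponentially and crushes the polynomial growth, while for $\alpha=1$ the ratio $\frac{q^{1/p}}{q-1}$ already does the job. When $\alpha<1$, the $\frac{1}{q-1}$ piece dies as before, and the remaining term carries the factor $\left(\sinc_p \alpha\right)^q$; here the strict inequality $\sinc_p \alpha < 1$ supplied by the Jordan inequality \eqref{pjordan} forces exponential decay that overwhelms $q^{1/p}$.

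The argument is essentially routine, so I do not expect a genuine obstacle. The only points requiring care are the \emph{strictness} $\sinc_p \alpha < 1$ for $\alpha\in(0,1)$, which is exactly what \eqref{pjordan} guarantees and what upgrades the decay from merely bounded to exponential, and the borderline value $\alpha=1$, where the exponential factor degenerates to $1$ and one must instead rely on $1/p<1$ to secure the decay of $q^{1/p}/(q-1)$.
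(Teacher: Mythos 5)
Your proposal is correct and follows essentially the same route as the paper: the $x^{-q}$ bound from $|\sin_p x|\leq 1$ for the tail, the split at $x=1$ with the monotonicity of $\sinc_p$ on $(0,\pi_p/2)$ for the case $\alpha<1$, and the exponential decay of $(\sinc_p\alpha)^q$ versus the polynomial factor $q^{1/p}$ for the limit. Your explicit attention to the strictness $\sinc_p\alpha<1$ and to the borderline case $\alpha=1$ is if anything slightly more careful than the paper's own write-up.
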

\begin{proof}
We first discuss the case $\alpha \in [1, \infty)$.
\begin{align*}
 \int_\alpha ^\infty \left\vert \frac{\sin_p x}{x} \right\vert^q \mathrm{d}x &= \displaystyle\lim_{\beta \rightarrow \infty}  \int_\alpha ^\beta \left\vert \frac{\sin_p x}{x} \right\vert^q \mathrm{d}x\\
 &\leq \displaystyle\lim_{\beta \rightarrow \infty} \int_\alpha ^\beta x^{-q} \mathrm{d}x\\
 &=\displaystyle\lim_{\beta \rightarrow \infty} \frac{1}{1-q}\left[\frac{1}{\beta^{q-1}}-\frac{1}{\alpha^{q-1}}\right]=\frac{1}{q-1}\alpha^{1-q}.
\end{align*}
Then,
\[
\displaystyle\lim_{q \rightarrow \infty} q^{1/p} \int_\alpha ^\infty \left\vert \frac{\sin_p x}{x} \right\vert^q \mathrm{d}x \leq \displaystyle\lim_{q \rightarrow \infty} \frac{q^{1/p}}{(q-1)\alpha^{q-1}}=0.
\]

For $\alpha \in (0,1)$, we have
\begin{align*}
 \int_\alpha ^\infty \left\vert \frac{\sin_p x}{x} \right\vert^q \mathrm{d}x &= \int_\alpha ^1 \left\vert \frac{\sin_p x}{x} \right\vert^q \mathrm{d}x + \int_1 ^\infty \left\vert \frac{\sin_p x}{x} \right\vert^q \mathrm{d}x.
\end{align*}
From the previous case we have,
\[
q^{1/p} \int_1^\infty \left\vert \frac{\sin_p x}{x} \right\vert^q \mathrm{d}x \leq \frac{q^{1/p}}{q-1},
\]
which approaches zero as $q \rightarrow \infty$.\\
For the remaining integral, the corresponding interval $(\alpha, 1)$ is a subset of $(0, \frac{\pi_p}{2})$ since $\pi_p \in (2, \infty)$. According to Lemma~\ref{sincpmonotonic}, 
\[
0<\frac{\sin_p x}{x}<\frac{\sin_p \alpha}{\alpha}<1, \qquad \forall x \in (\alpha, 1).
\]
Then,
\begin{align*}
  \int_\alpha ^\infty \left\vert \frac{\sin_p x}{x} \right\vert^q \mathrm{d}x&< \left(\frac{\sin_p \alpha}{\alpha}\right)^q(1-\alpha)+\frac{1}{q-1}.
\end{align*}
Using Lemma \ref{sincpmonotonic} {\itshape{(\ref{1a})}}, we conclude that
\[
\displaystyle\lim_{q \rightarrow \infty} q^{1/p} \int_\alpha ^\infty \left\vert \frac{\sin_p x}{x} \right\vert^q \mathrm{d}x  \leq \displaystyle\lim_{k \rightarrow \infty} q^{1/p}\left[\left(\frac{\sin_p \alpha}{\alpha}\right)^q(1-\alpha)+\frac{1}{q-1}\right]=0.
\]
\end{proof}
 Our main result is the following theorem:
\begin{mythe}\label{pBall}
Let $p, q \in (1,\infty)$. Then
\[
\displaystyle\lim_{q \rightarrow \infty} I_p(q)=\frac{1}{p}\Gamma\left(\frac{1}{p}\right)(p(p+1))^{1/p}.
\]
\end{mythe}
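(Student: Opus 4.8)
The plan is to apply Laplace's method, localising the integral near the origin where $\sinc_p$ attains its maximum value $1$. First I would invoke Lemma~\ref{alphavanish} to discard the tail: for any fixed $\alpha>0$ we have $q^{1/p}\int_\alpha^\infty |\sin_p x/x|^q\,\mathrm{d}x\to 0$, so
\[
\lim_{q\to\infty} I_p(q)=\lim_{q\to\infty} q^{1/p}\int_0^\alpha \left(\frac{\sin_p x}{x}\right)^q \mathrm{d}x,
\]
in the sense that the two sides share the same limiting behaviour. It therefore suffices to understand the integrand on an arbitrarily short interval $(0,\alpha)\subset(0,\frac{\pi_p}{2})$, where by the $p$-Jordan inequality \eqref{pjordan} the quantity $\sin_p x/x$ is positive and close to $1$.

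The central ingredient is the local behaviour of $\sin_p$ at the origin. Inverting the defining relation $x=F_p(\sin_p x)$ and using the expansion $(1-t^p)^{-1/p}=1+\frac{1}{p}t^p+O(t^{2p})$ in the integrand of \eqref{inversepsine}, I would obtain $F_p(y)=y+\frac{y^{p+1}}{p(p+1)}+O(y^{2p+1})$ and hence, after series inversion,
\[
\frac{\sin_p x}{x}=1-\frac{x^p}{p(p+1)}+o(x^p)\qquad(x\to 0^+),
\]
equivalently $-\log(\sin_p x/x)=\frac{x^p}{p(p+1)}\bigl(1+o(1)\bigr)$. Consequently, given any $\epsilon\in(0,1)$ there is $\delta=\delta(\epsilon)\in(0,\frac{\pi_p}{2})$ such that
\[
\exp\!\left(-(1+\epsilon)\frac{x^p}{p(p+1)}\right)\le \frac{\sin_p x}{x}\le \exp\!\left(-(1-\epsilon)\frac{x^p}{p(p+1)}\right),\qquad x\in(0,\delta).
\]

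With these two-sided exponential bounds in hand I would take $\alpha=\delta$ and squeeze $J(q):=q^{1/p}\int_0^\delta (\sin_p x/x)^q\,\mathrm{d}x$ between the two corresponding Weibull-type integrals. For each of them a substitution of the form $t=c\,q\,x^p$ converts the integral into a Gamma integral: extending the upper estimate to $[0,\infty)$ (which only enlarges it) and, for the lower estimate, subtracting a tail over $[\delta,\infty)$ that vanishes after multiplication by $q^{1/p}$, one is led to
\[
\frac{1}{p}\left(\frac{p(p+1)}{1+\epsilon}\right)^{1/p}\Gamma\!\left(\frac{1}{p}\right)\le \liminf_{q\to\infty} J(q)\le \limsup_{q\to\infty} J(q)\le \frac{1}{p}\left(\frac{p(p+1)}{1-\epsilon}\right)^{1/p}\Gamma\!\left(\frac{1}{p}\right).
\]
Letting $\epsilon\to 0^+$ pins both one-sided limits to $\frac{1}{p}(p(p+1))^{1/p}\Gamma(1/p)$, which together with the first paragraph yields the assertion.

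The main obstacle is making the local expansion rigorous and, above all, upgrading it to the \emph{uniform} two-sided exponential bounds valid on a whole interval $(0,\delta)$: a pointwise asymptotic alone does not control the $q$-th power under the integral sign as $q\to\infty$. Here I expect the recently-obtained inequalities for $\sin_p$ quoted in the introduction (refining \eqref{pjordan}) to do the decisive work, trapping $\sin_p x/x$ between $e^{-c_1 x^p}$ and $e^{-c_2 x^p}$ with explicit constants $c_1,c_2$ satisfying $c_1,c_2\to 1/(p(p+1))$ as $\delta\to 0$. Once this trapping is secured, the Gamma-integral substitution and the passage $\epsilon\to 0$ are routine.
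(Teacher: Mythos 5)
Your argument is correct, but it takes a genuinely different route from the paper's. The paper never expands $\sin_p$ at the origin: it substitutes $y=\sin_p x$ and invokes two explicit inequalities of Bhayo and Vuorinen for $x/\sin_p^{-1}x$ --- the lower bound $(1-x^p)^{1/(p(p+1))}$ on an explicit subinterval of $(0,1)$ and the upper bound $\left(1+\frac{x^p}{p(p+1)}\right)^{-1}$ on $(0,1)$ --- which convert the localised integral into exact Beta-function integrals, $\frac{1}{\lambda}B(\mu/\lambda,\nu)$ for the lower estimate and $\frac{1}{p}b^{-1/p}B(1/p,\,q-1/p)$ for the upper one; the limit then drops out of $\Gamma(q+a)/\Gamma(q+b)\sim q^{a-b}$, with the endpoint factor $(1-\sin_p^p\alpha_2)^{-1/p}\to 1$ playing the role of your $\epsilon$. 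Your version replaces these ready-made global inequalities by the local expansion $\sin_p x/x=1-\frac{x^p}{p(p+1)}+o(x^p)$, obtained from $F_p(y)=y+\frac{y^{p+1}}{p(p+1)}+O(y^{2p+1})$ in \eqref{inversepsine} and asymptotic inversion (which is legitimate even though the exponents $1,p+1,\dots$ are not integers), then squeezes between two Gamma integrals and sends $\epsilon\to 0^+$. This is the textbook Laplace method; it is self-contained, needing none of the cited $\sin_p^{-1}$ inequalities, at the price of the extra $\epsilon$-limit. One remark: the ``main obstacle'' you flag at the end is not actually an obstacle. The assertion $-\log(\sin_p x/x)=\frac{x^p}{p(p+1)}(1+o(1))$ as $x\to 0^+$ means precisely that for each $\epsilon$ there is a $\delta$ with the ratio in $[1-\epsilon,1+\epsilon]$ for \emph{all} $x\in(0,\delta)$ --- that is already the uniform two-sided bound you need, so no refined inequalities are required. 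You should just make explicit that, by Lemma~\ref{alphavanish}, the $\liminf$ and $\limsup$ of $q^{1/p}\int_0^{\delta(\epsilon)}$ coincide with those of $I_p(q)$ for every choice of $\epsilon$, so the $\epsilon$-dependent cutoff does no harm.
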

\begin{proof}
From \cite[3.251, p. 324]{Grad2007}, for $\mu>0$, $\nu>0$, $\lambda>0$ we have
\begin{align}\label{equ1}
\int_0^1 x^{\mu-1} (1-x^\lambda)^{\nu-1}\mathrm{d}x=\frac{1}{\lambda} B\left(\frac{\mu}{\lambda}, \nu\right).
\end{align}
%According to Lemma 3.1 (4) \cite{bhayo2012generalized},
%\begin{align}\label{equ2}
%\pFq{2}{1}{a, b}{c}{x}\leq \frac{\Gamma(c)\Gamma(c-a-b)}{\Gamma(c-a) \Gamma(c-b)} \ \quad c>a+b, \ x \in [0, 1].
%\end{align}
From \cite[Theorem 1.1 (1) ]{bhayo2012generalized}, for $0<x<\left(1-\left(\frac{2}{\pi_p}\right)^{p(p+1)}\right)^{1/p}$ we have
\begin{align}\label{equ22}
\frac{x}{\sin_p^{-1} x}>(1-x^p)^{\frac{1}{p(p+1)}}.
\end{align}
Note that for all $p \in (1, \infty)$ we have
\[
0<\left(1-\left(\frac{2}{\pi_p}\right)^{p(p+1)}\right)^{1/p}<1.
\]
Let 
\[
\alpha_1:=\sin_p^{-1}\left(1-\left(\frac{2}{\pi_p}\right)^{p(p+1)}\right)^{1/p} \in \left(0, \frac{\pi_p}{2}\right). 
\]
Changing the variable to $y=\sin_p x$ and using the inequality in \eqref{equ22} we see that
\begin{align*}
q^{1/p}\int_0^\infty \left\vert\frac{\sin_p x}{x}\right\vert^q \mathrm{d}x&>q^{1/p}\int_0^{\alpha_1} \left(\frac{\sin_p x}{x}\right)^q \mathrm{d}x\\
&=q^{1/p}\int_0^{\sin_p \alpha_1}\left(\frac{y}{\sin_p^{-1} y}\right)^q (1-y^p)^{-1/p} \mathrm{d}y\\
&>q^{1/p}\int_0^{\sin_p \alpha_1}(1-y^p)^{\frac{q}{p(p+1)}} (1-y^p)^{-1/p} \mathrm{d}y\\
&=q^{1/p}\left(\int_0^1-\int_{\sin_p \alpha_1}^1 (1-y^p)^{\frac{q}{p(p+1)}} (1-y^p)^{-1/p} \mathrm{d}y\right)\\
&=:J_1(p,q)-J_2(p,q).
\end{align*}
From \eqref{equ1} with $\mu=1,\ \lambda=p$ and $\nu=\frac{q}{p(p+1)}-\frac{1}{p}+1$ we get
\begin{align}\label{j1}
\displaystyle\lim_{q \rightarrow \infty}J_1(p, q)&=\displaystyle\lim_{q \rightarrow \infty}q^{1/p}\frac{1}{p} \Gamma\left(\frac{1}{p}\right)\frac{\Gamma\left(\frac{q}{p(p+1)}-\frac{1}{p}+1\right)}{\Gamma\left(\frac{q}{p(p+1)}+1\right)} \notag\\
&=\frac{1}{p} \Gamma\left(\frac{1}{p}\right)\left(p(p+1)\right)^{1/p}.
%&\leq\frac{1}{p} (p(p+1))^{1/p} \Gamma\left(\frac{1}{p}\right)\frac{\Gamma\left(q+\nu-\frac{1}{p}\right)}{\Gamma\left(q+\nu\right)} \frac{\Gamma(q+\nu)\ \Gamma\left(q-\frac{1}{p}\right)}{\Gamma(q) \Gamma\left(q+\nu-\frac{1}{p}\right)}.
\end{align}
The last equality is due to the fact that
\begin{align}\label{gamma}
\frac{\Gamma(q+a)}{\Gamma(q+b)} \sim q^{a-b} \quad \text{as}\quad q \rightarrow \infty,
\end{align}
which follows from Stirling's formula: see also \cite[Problem 2, p.45]{henriciapplied}.\\
Moreover,
\begin{align}\label{j2}
\displaystyle\lim_{q \rightarrow \infty}J_2(p, q)&=\displaystyle\lim_{q \rightarrow \infty} q^{1/p}\int_{\sin_p \alpha_1}^1(1-y^p)^{\frac{q}{p(p+1)}} (1-y^p)^{-1/p} \mathrm{d}y\notag\\
&\leq \displaystyle\lim_{q \rightarrow \infty} q^{1/p}(1-\sin_p^p\alpha_1)^{\frac{q}{p(p+1)}}\int_{\sin_p \alpha_1}^1 (1-y^p)^{-1/p} \mathrm{d}y\notag\\
&\leq \displaystyle\lim_{q \rightarrow \infty} q^{1/p}(1-\sin_p^p\alpha_1)^{\frac{q}{p(p+1)}}\int_0^1 (1-y^p)^{-1/p} \mathrm{d}y\notag\\
&=\frac{\pi_p}{2}\displaystyle\lim_{q \rightarrow \infty} q^{1/p}(1-\sin_p^p\alpha_1)^{\frac{q}{p(p+1)}}=0.
\end{align}
Then from \eqref{j1} and \eqref{j2},
\begin{align}\label{inf}
\liminf_{q \rightarrow \infty} q^{1/p}\int_0^\infty \left\vert\frac{\sin_p x}{x}\right\vert^q \mathrm{d}x\geq\frac{1}{p} \Gamma\left(\frac{1}{p}\right)\left(p(p+1)\right)^{1/p}.
\end{align}

On the other hand, from \cite[3.251, p.325]{Grad2007}, for $0 <\mu<p\nu$, $b>0$ and $p>0$ we have
\begin{align}\label{equ2}
\int_0^\infty x^{\mu-1} (1+bx^p)^{-\nu}\mathrm{d}x=\frac{1}{p} b^{-\mu/p} B\left(\frac{\mu}{p}, \nu-\frac{\mu}{p}\right).
\end{align}
From \cite[Theorem 1.1 (1)]{bhayo2012generalized}, for $x \in (0, 1)$ we have
\begin{align}\label{ineqarcsinp}
\frac{x}{\sin_p^{-1} x}< \left(1+\frac{x^p}{p(p+1)}\right)^{-1}.
\end{align}
Now let $\alpha_2 \in (0, \pi_p/2]$. Changing the variable to $y=\sin_p x$ and using the inequality in \eqref{ineqarcsinp} we obtain
\begin{align}\label{equ4}
\int_0^{\alpha_2}\left(\frac{\sin_p x}{x}\right)^q \mathrm{d}x&=\int_0^{\sin_p \alpha_2} \left(\frac{y}{\sin_p^{-1} y}\right)^q (1-y^p)^{-1/p} \mathrm{d}y\notag\\
&< \int_0^{\sin_p \alpha_2}  \left(1+\frac{y^p}{p(p+1)}\right)^{-q} (1-y^p)^{-1/p} \mathrm{d}y\notag\\
&<\left(1-\sin_p^p \alpha_2\right)^{-1/p}  \int_0^{\sin_p \alpha_2}  \left(1+\frac{y^p}{p(p+1)}\right)^{-q} \mathrm{d}y\notag\\
&=:\left(1-\sin_p^p \alpha_2\right)^{-1/p} J_3(p, q).
\end{align}
From \eqref{equ2}, with $\mu=1,\ b=(p(p+1))^{-1}$ and $\nu=q \in (1/p, \infty)$, we obtain
\begin{align}\label{equ5}
J_3(p, q)&<\frac{1}{p} (p(p+1))^{1/p} B\left(\frac{1}{p}, q-\frac{1}{p}\right)\notag \\
&=\frac{1}{p} (p(p+1))^{1/p} \Gamma\left(\frac{1}{p}\right)\frac{\Gamma\left(q-\frac{1}{p}\right)}{\Gamma(q)}.
\end{align}
With the help of \eqref{gamma} it follows that
\[
\limsup_{q \rightarrow \infty} q^{1/p} \int_0^{\alpha_2}\left\vert\frac{\sin_p x}{x}\right\vert^q \mathrm{d}x \leq \left(1-\sin_p^p \alpha_2\right)^{-1/p} \frac{1}{p} \left(p(p+1)\right)^{1/p} \Gamma\left(\frac{1}{p}\right).
\]
Letting $\alpha_2 \rightarrow 0^+$ we conclude that,
\[
\limsup_{q \rightarrow \infty} q^{1/p} \int_0^{\alpha_2}\left(\frac{\sin_p x}{x}\right)^q \mathrm{d}x \leq \frac{1}{p}\left(p (p+1)\right)^{1/p} \Gamma\left(\frac{1}{p}\right).
\]
From Lemma \ref{alphavanish} we conclude that
\begin{align}\label{sup}
\limsup_{q \rightarrow \infty}  q^{1/p}\int_0^\infty\left\vert\frac{\sin_p x}{x}\right\vert^q \mathrm{d}x&=\limsup_{q \rightarrow \infty} q^{1/p} \left(\int_0^{\alpha_2}+\int_{\alpha_2}^\infty\left\vert\frac{\sin_p x}{x}\right\vert^q \mathrm{d}x\right)\notag\\
&\leq\frac{1}{p}\left(p (p+1)\right)^{1/p} \Gamma\left(\frac{1}{p}\right).
\end{align}
From \eqref{inf} and \eqref{sup} we deduce that
\[
\displaystyle\lim_{q \rightarrow \infty} I_p(q)=\frac{1}{p}\Gamma\left(\frac{1}{p}\right)(p(p+1))^{1/p}.
\]
\end{proof}
%%%%%%
%%%%%%
%%%%%%
%%%%%%
\section{Asymptotic Expansion}
Here we investigate the asymptotic expansion of the $p-$Ball integral $I_p(q)$ by performing explicit calculations leading to a precise knowledge of the first two coefficients of the expansion. The study involved provides another proof of Theorem \ref{pBall}; the technique used is an adaptation of that developed in \cite{kerman2015asymptotically}.
\begin{mythe}\label{asyexp}
There exist constants $\gamma_3, \gamma_4, \ldots$ such that for $q$ large enough
\begin{eqnarray*}\nonumber
I_p(q) \sim \left(p(p+1)\right)^{1/p}\left( \frac{1}{p}\Gamma\left(\frac{1}{p}\right)+ \frac{1}{p}\Gamma\left(\frac{1}{p}\right)\frac{(-p^2+p+1)(p+1)}{2p(2p+1)}\frac{1}{q}+\sum_{j=3}^{\infty} \Gamma\left(j+1/p\right)\frac{\gamma_j}{q^j}\right).
\end{eqnarray*}
\end{mythe}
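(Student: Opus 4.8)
The plan is to set aside the inequalities of Section~\ref{sec3} and instead drive everything from the Maclaurin series of $\sin_p^{-1}$: localise the integral at the origin, turn it into a Laplace integral, and apply Watson's lemma. This is the scheme of \cite{kerman2015asymptotically} adapted to the $p$-setting, and its leading term will reproduce Theorem~\ref{pBall}.

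First I would fix $\delta\in(0,\pi_p/2)$ and split $I_p(q)=q^{1/p}\int_0^\delta(\sin_p x/x)^q\,\mathrm{d}x+q^{1/p}\int_\delta^\infty|\sin_p x/x|^q\,\mathrm{d}x$. By Lemma~\ref{sincpmonotonic} together with \eqref{pjordan}, one has $|\sin_p x/x|\le\rho:=\sin_p\delta/\delta<1$ for every $x\ge\delta$ (the ratio decreases on $(\delta,\pi_p/2)$ and is $\le 1/x\le 2/\pi_p<\rho$ beyond $\pi_p/2$), so the tail is bounded by $\rho^{\,q-2}\int_\delta^\infty|\sin_p x/x|^2\,\mathrm{d}x$. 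This is exponentially small in $q$ and hence contributes nothing to an asymptotic series in powers of $1/q$; note that the cruder estimate of Lemma~\ref{alphavanish} is not enough here, precisely because of its $1/(q-1)$ term.

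On $(0,\delta)$ I would substitute $y=\sin_p x$, so that $\mathrm{d}x=(1-y^p)^{-1/p}\,\mathrm{d}y$ and $\sin_p x/x=y/\sin_p^{-1}y$, and then set $t=\log(\sin_p^{-1}y/y)$. Since $\sinc_p$ is strictly decreasing on $(0,\pi_p/2)$ (Lemma~\ref{sincpmonotonic}), the map $y\mapsto t$ is a smooth increasing bijection, and the localised integral becomes $q^{1/p}\int_0^{t_0}e^{-qt}g(t)\,\mathrm{d}t$ with $g(t)=(1-y^p)^{-1/p}\,\mathrm{d}y/\mathrm{d}t$. The analytic heart is the expansion of $g$ at $t=0$. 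Expanding $(1-s^p)^{-1/p}$ by the binomial series and integrating \eqref{inversepsine} term by term gives $\sin_p^{-1}y=y+\frac{1}{p(p+1)}y^{p+1}+\frac{p+1}{2p^2(2p+1)}y^{2p+1}+\cdots$, so $t$ is a genuine power series in $w:=y^p$ with $t=\frac{w}{p(p+1)}+O(w^2)$. Reverting this series expresses $w$, and hence each factor of $g(t)=(1-w)^{-1/p}\frac1p w^{1/p-1}\,\mathrm{d}w/\mathrm{d}t$, as $t^{1/p-1}$ times an honest power series in $t$: $g(t)=t^{1/p-1}\sum_{j\ge0}d_j t^j$ with $d_0=\frac1p(p(p+1))^{1/p}$. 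Watson's lemma then yields $q^{1/p}\int_0^\infty e^{-qt}g(t)\,\mathrm{d}t\sim\sum_{j\ge0}d_j\,\Gamma(j+1/p)\,q^{-j}$, which is exactly the stated form with $\gamma_j=d_j/(p(p+1))^{1/p}$; the $j=0$ term is $\frac1p(p(p+1))^{1/p}\Gamma(1/p)$, recovering Theorem~\ref{pBall}. Carrying $\sin_p^{-1}$ to order $y^{2p+1}$ and reverting to order $t^2$ then produces the explicit coefficient $\frac{(-p^2+p+1)(p+1)}{2p(2p+1)}$ of the $1/q$ term.

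I expect the main obstacle to be this reversion-and-composition bookkeeping carried out symbolically in $p$. Each $\gamma_j$ needs the coefficients of $\sin_p^{-1}$ up to order $y^{jp+1}$, the Lagrange inversion of $t(w)$, and the product of the three factors defining $g$; doing this once for $d_0,d_1$ is routine, but organising it so that the general coefficient is unambiguously defined, rather than computed ad hoc, is where the adaptation of \cite{kerman2015asymptotically} is needed. A secondary, more technical point is to make Watson's lemma rigorous: continuity of $g$ on $(0,t_0)$, the endpoint expansion with a uniformly controlled remainder, and the replacement of $\int_0^{t_0}$ by $\int_0^\infty$ at the cost of an exponentially small error.
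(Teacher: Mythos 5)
Your proposal is correct, and it reaches the stated expansion by a route that differs from the paper's in both of its main steps. For the tail, the paper simply invokes Lemma \ref{alphavanish}, whose bound contains the term $q^{1/p}/(q-1)$; as you rightly observe, that is only $O(q^{1/p-1})$ and so does not justify an asymptotic series to all orders in $1/q$ --- your replacement bound $\rho^{\,q-2}\int_\delta^\infty|\sinc_p x|^2\,\mathrm{d}x$ with $\rho=\sin_p\delta/\delta<1$ (using \eqref{pjordan} to handle $x>\pi_p/2$) is exponentially small and actually repairs this weakness. For the local part, the paper stays in the variable $x$ (rescaled to $u=x/(p(p+1))^{1/p}$), factors the integrand as $e^{-qu^p}\bigl[e^{u^p}\sinc_p((p(p+1))^{1/p}u)\bigr]^q$, expands the bracket as $1+\sum_{j\ge2}b_ju^{pj}$, raises to the $q$-th power by a binomial expansion justified through a domination argument, and applies Olver's Theorem 8.1; you instead substitute $y=\sin_p x$ and then $t=\log(\sin_p^{-1}y/y)$ to obtain an exact Laplace integral $\int_0^{t_0}e^{-qt}g(t)\,\mathrm{d}t$ with $g(t)=t^{1/p-1}\sum_j d_jt^j$, and apply Watson's lemma. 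Your version avoids the delicate $q$-th-power rearrangement entirely and keeps the coefficients $\gamma_j=d_j/(p(p+1))^{1/p}$ independent of $q$ from the outset (in the paper the $c_j$ are polynomials in $q$ that must be redistributed across the powers of $1/q$), at the price of a Lagrange reversion of $t(w)$; both methods ultimately feed on the same Maclaurin series of $\sin_p^{-1}$. The one thing you assert without carrying out is the explicit $1/q$ coefficient; I checked that your scheme does deliver it: with $t=Aw+Cw^2+O(w^3)$, $A=\tfrac{1}{p(p+1)}$, $C=\tfrac{p^2+3p+1}{2p(2p+1)(p+1)^2}$, the reversion gives $d_1=\tfrac1p(p(p+1))^{1/p}\tfrac{(p+1)(-p^2+p+1)}{2(2p+1)}$, and $d_1\Gamma(1+1/p)q^{-1}$ is exactly the stated second term.
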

\begin{proof}
For $\alpha \in (0, 1)$, let 
\begin{align*}
J(q, \alpha)&:=q^{1/p}\int_0^\alpha \left(\frac{\sin_p x}{x}\right)^q \mathrm{d}x \\
&=q^{1/p}\int_0^\alpha \exp\left(\frac{-q x^p}{p(p+1)} \right)\left[ \exp\left(\frac{x^p}{p(p+1)} \right)\frac{\sin_p x}{x} \right]^q \mathrm{d}x.
\end{align*}
By Lemma \ref{alphavanish},
\begin{align*}
q^{1/p}\int_\alpha^\infty \left\vert\frac{\sin_p x}{x}\right\vert^q \mathrm{d}x\leq q^{1/p}\left(\frac{\sin_p \alpha}{\alpha}\right)^q(1-\alpha)+\frac{q^{1/p}}{q-1}.
\end{align*}
It is therefore enough to establish the existence of constants $\gamma_3, \gamma_4, \ldots$ such that
\begin{eqnarray*}
J(q, \alpha)\sim \left(p(p+1)\right)^{1/p}\left( \frac{1}{p}\Gamma\left(\frac{1}{p}\right)+ \frac{1}{p}\Gamma\left(\frac{1}{p}\right)\frac{(-p^2+p+1)(p+1)}{2p(2p+1)}\frac{1}{q}+\sum_{j=3}^{\infty} \Gamma\left(j+1/p\right)\frac{\gamma_j}{q^j}\right).
\end{eqnarray*}
Changing the variable to $u=\frac{x}{\left(p(p+1)\right)^{1/p}}$ yields
\begin{align*}
J(q, \alpha)&=q^{1/p} \left(p(p+1)\right)^{1/p}\int_0^{\frac{\alpha}{\left(p(p+1)\right)^{1/p}}} \exp\left(-q u^p\right)\left[ \exp\left(u^p\right)\frac{\sin_p \left(\left(p(p+1)\right)^{1/p} u\right)}{\left(p(p+1)\right)^{1/p} u} \right]^q \mathrm{d}u.
\end{align*}
For the exponential term we have 
\[
\exp\left(u^p\right)=\sum_{j=0}^\infty \frac{u^{pj}}{j!}.
\]
While for $\frac{\sin_p \left(\left(p(p+1)\right)^{1/p} u\right)}{\left(p(p+1)\right)^{1/p} u} $, we have from \cite[(2.17)]{BushellEdmunds2012} the power series expansion of $\sin_p^{-1} x$, and by the Lagrange reversion theorem this gives the existence of constants $a_j$ such that 
\[
\frac{\sin_p \left(\left(p(p+1)\right)^{1/p} u\right)}{\left(p(p+1)\right)^{1/p} u} =\sum_{j=0}^\infty a_j \left(p(p+1)\right)^{j} u^{pj};
\]
the series converges for sufficiently small $u$. The coefficients of the first three terms of this expansion involve $a_0=1$, $a_1=\frac{-1}{p(p+1)}$ and $a_2=\frac{-p^2+2p+1}{2p^2(p+1)(2p+1)}$. The Cauchy product formula then gives,
\[
\exp\left(u^p\right)\frac{\sin_p \left(\left(p(p+1)\right)^{1/p} u\right)}{\left(p(p+1)\right)^{1/p} u} =1+\sum_{j=2}^\infty b_j u^{pj},
\]
where
\[
b_j=\sum_{l=0}^j\frac{a_{j-l} \left(p(p+1)\right)^{j-l} }{l!}
\]
and the power series converges for sufficiently small $u$.

We know that for small values of $u$,
\[
\left\vert\exp\left(u^p\right)\frac{\sin_p \left(\left(p(p+1)\right)^{1/p} u\right)}{\left(p(p+1)\right)^{1/p} u}-1\right\vert=\left\vert\sum_{j=2}^\infty b_j u^{pj}\right\vert\leq \sum_{j=2}^\infty \left\vert b_j \right\vert u^{pj}<1.
\]
Note that the power series $\sum_{j=2}^\infty b_j u^{pj}$ is absolutely convergent for sufficiently small $u$. 

Therefore by the Binomial expansion we get
\begin{align*}
\left[\exp\left(u^p\right)\frac{\sin_p \left(\left(p(p+1)\right)^{1/p} u\right)}{\left(p(p+1)\right)^{1/p} u}\right]^q&=1+q\left[\sum_{j=2}^\infty b_j u^{pj}\right]+\frac{q(q-1)}{2}\left[\sum_{j=2}^\infty b_j u^{pj}\right]^2\\
&+\cdots+\frac{q(q-1)\ldots(q-m+1)}{m!}\left[\sum_{j=2}^\infty b_j u^{pj}\right]^m+\cdots
\end{align*}
Since the right hand side of the Binomial expansion is bounded from above by
\begin{align*}
1&+q\left[\sum_{j=2}^\infty \left\vert b_j\right\vert u^{pj}\right]+\frac{q(q-1)}{2}\left[\sum_{j=2}^\infty \left\vert b_j \right\vert u^{pj}\right]^2\\
&+\cdots+\frac{q(q-1)\ldots(q-m+1)}{m!}\left[\sum_{j=2}^\infty \left\vert b_j\right\vert  u^{pj}\right]^m+\cdots=\left[1+\sum_{j=2}^\infty \left\vert b_j \right\vert u^{pj}\right]^q,
\end{align*}
we may rearrange terms and, for small enough $u$, obtain \eqref{expansion} .
%The other coefficients $\gamma_j$, $j \geq 2$ of the asymptotic expansion can be calculated from the Binomial expansion of $\left[\exp\left(\frac{s^p}{qp(p+1)}\right)\left(\frac{\sin_p\left(\frac{s}{q^{1/p}}\right)}{\left(\frac{s}{q^{1/p}}\right)}\right)\right]^q$ (mentioned in the proof above) using the integration \eqref{intj}. Here we present calculations for $\gamma_2, \gamma_3$ and $\gamma_4$:
%\begin{align*}
%\gamma_{2}&=b_{3}\ (p(p+1))^{3+1/p}\frac{1}{p}\Gamma(3+1/p)+b_{2}\ (p(p+1))^{4+1/p}\frac{1}{p}\Gamma(4+1/p),\\
%\gamma_{3}&=b_{4}\ (p(p+1))^{4+1/p}\frac{1}{p}\Gamma(4+1/p)+b_{2}\ (p(p+1))^{6+1/p}\frac{1}{p}\Gamma(6+1/p),\\
%\gamma_{4}&=b_{5}\ (p(p+1))^{5+1/p}\frac{1}{p}\Gamma(5+1/p)+b_{3}\ (p(p+1))^{6+1/p}\frac{1}{p}\Gamma(6+1/p)\\
%&+b_{2}\ (p(p+1))^{8+1/p}\frac{1}{p}\Gamma(8+1/p).
%\end{align*}
Hence,
\begin{align}\label{expansion}
\left[\exp\left(u^p\right)\frac{\sin_p \left(\left(p(p+1)\right)^{1/p} u\right)}{\left(p(p+1)\right)^{1/p} u}\right]^q=\sum_{j=0}^{\infty}  c_j u^{pj},
\end{align}
where $c_0=1$, $c_1=0$ and $c_2=q b_2=\frac{qp(-p^2+p+1)}{2(2p+1)}$. Observe that the other coefficients $c_j=c_j(q) \ (j \geq 3)$ can be obtained by the following rearrangements:
\begin{align*}
qb_3u^{3p}=c_3 u^{3p}, \quad  \left(qb_4+\frac{q(q-1)}{2}b_2^2\right)u^{4p}&=c_4 u^{4p}, \ldots
\end{align*}

Specialised to our case, \cite[Theorem 8.1, p. 86]{olver1974asymptotics} (with $x=q, \ p(t)=u^p, \ q(t)= \exp\left(u^p\right)\frac{\sin_p \left(\left(p(p+1)\right)^{1/p} u\right)}{\left(p(p+1)\right)^{1/p} u}, \ s=pj,\ \lambda=1$ and $\mu=p$) establishes the existence of real constants $\gamma_0, \gamma_1, \ldots$ such that
\begin{align*}
J(q, \alpha) &\sim q^{1/p}\left(p(p+1)\right)^{1/p}\sum_{j=0}^\infty \Gamma\left(j+1/p\right)\frac{\gamma_j}{q^{j+1/p}}\\
&=\left(p(p+1)\right)^{1/p}\sum_{j=0}^\infty \Gamma\left(j+1/p\right)\frac{\gamma_j}{q^{j}}, \quad (q \rightarrow \infty)
\end{align*}
where 
\begin{equation*}
\gamma_0=\frac{1}{p}, \quad \gamma_1=0 \quad \text{and} \quad \gamma_2=\frac{q(-p^2+p+1)}{2(2p+1)}.
\end{equation*}
\end{proof}
\begin{rem}
The asymptotic expansion in Theorem \ref{asyexp} complements that of \cite{kerman2015asymptotically} when $p=2$; it involves the coefficients $b_j$ of the expansion of \linebreak$\exp\left(u^p\right)\frac{\sin_p \left(\left(p(p+1)\right)^{1/p} u\right)}{\left(p(p+1)\right)^{1/p} u}$ which depend on the constants $a_j$ of the power series of the function $\sinc_p$. So far the first three terms in the expansion of $\sin_p$ are known and no regular pattern has been obtained for the other subsequent terms. It remains to see whether or not higher-order terms in the expansion of $I_p(q)$ can be determined.
%This leaves us with an intriguing concern whether it is possible to establish a precise calculation for the third term of the asymptotic expansion of the $I_p(q)$.
\end{rem}
%%%%%%%%
%%%%%%%%
%%%%%%%%
%%%%%%%%
\section{Concluding remarks}
In this section we present some results obtained from Theorem \ref{pBall}. The proofs are natural adaptations of those
given in \cite{borwein2010p} and are therefore omitted. 

For $q\in(1, \infty)$ and $n \in \mathbb{N}\cup \{0\}$, define 
\[
\varphi_p(n, q):=\int_0^\infty \left(\ln \left\vert\frac{\sin_p x}{x}\right\vert\right)^n \left\vert \frac{\sin_p x}{x}\right\vert^q\ \mathrm{d}x.
\]
Note that $\varphi_p(0,q):=\varphi_p(q)=I_p(q)$.

A more general result of $p-$Ball integral inequality can also be achieved by induction for any non-negative integer $n$.
%%%%%
\begin{mylem}
For $n \in \mathbb{N}\cup \{0\}$ and $p\in (1, \infty)$. Then
\begin{align*}
\displaystyle\lim_{q \rightarrow \infty} q^{n+\frac1p}\varphi_p(n, q)=(-1)^n\frac{1}{p}\Gamma\left(\frac{1}{p}\right)(p(p+1))^{1/p} \ \Gamma\left(n+\frac1p\right).
\end{align*}
\end{mylem}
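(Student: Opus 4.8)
The plan is to prove the formula by induction on $n$, exploiting the fact that the logarithmic weight is generated by differentiation in the parameter $q$. Since $\partial_q|\sin_p x/x|^{q}=(\ln|\sin_p x/x|)\,|\sin_p x/x|^{q}$, iterating gives, once differentiation under the integral sign is justified, the identity $\varphi_p(n,q)=\partial_q\varphi_p(n-1,q)=\partial_q^{\,n}\varphi_p(0,q)$; the base case $n=0$ is then exactly Theorem \ref{pBall}, because $q^{1/p}\varphi_p(0,q)=I_p(q)$. First I would secure the differentiation under the integral sign, uniformly for $q$ in a compact subinterval $[q_0,q_1]$ of $(1,\infty)$. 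Because $|\sin_p x/x|\le1$ everywhere by Lemma \ref{sincpmonotonic}, for $q\ge q_0$ the relevant integrand is dominated by $|\ln|\sin_p x/x||^{n}\,|\sin_p x/x|^{q_0}$; near each simple zero $k\pi_p$ of $\sin_p$ this behaves like $|\ln|x-k\pi_p||^{n}\,|x-k\pi_p|^{q_0}$, integrable since $q_0>1$, and near infinity it is at most $(\ln x)^{n}x^{-q_0}$, again integrable. This uniform majorant legitimises the interchange.

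For the inductive step I would establish the recursion $L_n=-(n-1+1/p)\,L_{n-1}$ for the limits $L_n:=\lim_{q\to\infty}q^{\,n+1/p}\varphi_p(n,q)$; together with $\Gamma(n+1/p)=(n-1+1/p)\,\Gamma(n-1+1/p)$ this propagates the base value of Theorem \ref{pBall} to the asserted limit, the sign $(-1)^n$ accumulating one factor per differentiation. Since one may not differentiate a bare asymptotic equivalence, I would instead appeal to the full asymptotic expansion of $\varphi_p(0,q)=q^{-1/p}I_p(q)$ recorded in Theorem \ref{asyexp}: an expansion produced by Laplace's method is differentiable term by term in $q$, and differentiating the leading term (with $\gamma_0=1/p$) exactly $n$ times reproduces $L_n$.

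A more self-contained route, which I would run in parallel as a check, applies Laplace's method directly to $\varphi_p(n,q)$. Splitting $\int_0^\infty=\int_0^\alpha+\int_\alpha^\infty$ with $\alpha\in(0,1)$, the tail is handled by a strengthening of Lemma \ref{alphavanish}: by Lemma \ref{sincpmonotonic} together with $|\sin_p x/x|\le1/x$ there is some $\rho<1$ with $|\sin_p x/x|\le\rho$ throughout $[\alpha,\infty)$, so factoring $|\sin_p x/x|^{q}\le\rho^{\,q-2}|\sin_p x/x|^{2}$ bounds the log-weighted tail by $\rho^{\,q-2}$ times a convergent integral, whence $q^{\,n+1/p}\int_\alpha^\infty\to0$ by exponential decay. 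On $[0,\alpha]$ the local expansion $\ln(\sin_p x/x)=-x^{p}/(p(p+1))+O(x^{2p})$ shows the integrand is asymptotic to $(-1)^n(p(p+1))^{-n}x^{pn}\exp(-qx^{p}/(p(p+1)))$; the substitution $t=qx^{p}/(p(p+1))$ converts the leading contribution into a constant multiple of $q^{-n-1/p}\int_0^\infty t^{\,n+1/p-1}e^{-t}\,dt=q^{-n-1/p}\Gamma(n+1/p)$, and fixing the constant by matching the $n=0$ normalisation against Theorem \ref{pBall} gives the stated value.

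The main obstacle is the uniform control of error terms. In the direct route one must show that the $O(x^{2p})$ correction to $\ln(\sin_p x/x)$ and the higher binomial terms of $[\exp(x^{p}/(p(p+1)))\,\sin_p x/x]^{q}$ contribute only at lower order after multiplication by $q^{\,n+1/p}$; after the substitution these terms carry extra powers of $1/q$, so this reduces to Watson-type estimates already implicit in Theorem \ref{asyexp}. In the inductive route the delicate point is instead certifying that the expansion of Theorem \ref{asyexp} may genuinely be differentiated term by term, rather than merely used as an asymptotic equality. Both difficulties converge on the same structural fact: the logarithmic weight only reshapes the algebraic prefactor $t^{\,n+1/p-1}$ in the governing Gamma integral, leaving the Gaussian-type exponential and hence the exponential order of decay untouched. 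Once this is secured the recursion closes and the induction yields the claimed limit for every $n$.
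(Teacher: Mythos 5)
The paper offers no written proof of this lemma (it declares the proofs of this section to be natural adaptations of those in \cite{borwein2010p} and omits them), so there is nothing to compare line by line; but what you propose --- the identity $\varphi_p(n,q)=\partial_q^{\,n}\varphi_p(0,q)$ secured by a dominated-convergence argument, combined with Laplace's method applied to the log-weighted integral --- is exactly the intended adaptation, and its structure is sound. Your tail estimate via a uniform bound $\vert\sinc_p x\vert\le\rho<1$ on $[\alpha,\infty)$ is correct (and sharper than Lemma \ref{alphavanish}), and the local computation, substituting $t=qx^p/(p(p+1))$ after $\ln\sinc_p x=-x^p/(p(p+1))+O(x^{2p})$, is the right one. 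One caveat on the inductive route: the claim that an expansion ``produced by Laplace's method is differentiable term by term in $q$'' is not something you may cite as stated --- differentiating an asymptotic relation is illegitimate in general, and Theorem \ref{asyexp} as written carries no uniformity in $q$ that would license it. The direct Watson-type argument applied to each $\varphi_p(n,q)$ separately is the route that actually closes; treat the induction as a consistency check.

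The more important point is that your computation does not give the stated value, and you should not assert that it does. Both of your routes yield
\[
\lim_{q\to\infty}q^{\,n+1/p}\varphi_p(n,q)=(-1)^n\,\frac{1}{p}\bigl(p(p+1)\bigr)^{1/p}\,\Gamma\Bigl(n+\frac{1}{p}\Bigr);
\]
indeed your recursion $L_n=-(n-1+1/p)L_{n-1}$ with $L_0=\frac1p\Gamma(1/p)(p(p+1))^{1/p}$ gives $L_n=(-1)^n\frac{\Gamma(n+1/p)}{\Gamma(1/p)}L_0$, in which the $\Gamma(1/p)$ cancels. The lemma as printed carries an additional factor $\Gamma(1/p)$: at $n=0$ it reads $\frac1p\Gamma(1/p)^2(p(p+1))^{1/p}$, which contradicts Theorem \ref{pBall}, and at $p=2$, $n=0$ it would insert a spurious $\sqrt{\pi}$ into Ball's constant. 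So the printed statement contains an error which your derivation in fact exposes --- but your closing claim that ``matching the $n=0$ normalisation against Theorem \ref{pBall} gives the stated value'' is false as written, since the matched constant is the $\Gamma(1/p)$-free one. State the corrected constant explicitly rather than asserting agreement with the formula as printed.
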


The following gives the analyticity of the function $\varphi_p(q)$ in a region containing $(1, \infty)$. The proof makes use of the $L_q-$Lebesgue integrability of the $\sinc_p$ functions when $p, q \in (1, \infty)$.
\begin{cor}
Let $q\in (1, \infty)$. For $1-q<z<q-1$,
\[
\varphi_p(q-z)=\sum_{n=0}^\infty (-1)^n \varphi_p(n, q) \frac{z^n}{n!},
\]
where $\varphi_p^{(n)}(q)=\varphi_p(n, q)$.
\end{cor}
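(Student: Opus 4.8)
The plan is to realise $\varphi_p$ as the restriction to the real axis of a function holomorphic on a half-plane, and then to read off the Taylor expansion from the geometry of that half-plane. Writing $f(x):=\left|\frac{\sin_p x}{x}\right|$, so that $0\le f(x)\le 1$, $f(x)\le 1/x$ for all $x>0$, and $f(x)^w=e^{w\ln f(x)}$, I would introduce
\[
\Phi(w):=\int_0^\infty f(x)^w\,\mathrm{d}x,\qquad \operatorname{Re}w>1,
\]
which agrees with $\varphi_p$ on $(1,\infty)$. Since $|f(x)^w|=f(x)^{\operatorname{Re}w}$ and $f(x)^{\operatorname{Re}w}\le x^{-\operatorname{Re}w}$ for $x\ge 1$, the integral converges absolutely whenever $\operatorname{Re}w>1$, so $\Phi$ is well defined on the half-plane $H:=\{\operatorname{Re}w>1\}$.

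First I would show that $\Phi$ is holomorphic on $H$. Fix a compact $K\subset H$, so that $\operatorname{Re}w\ge 1+\delta$ on $K$ for some $\delta>0$; then $|f(x)^w|\le f(x)^{1+\delta}$, and the right-hand side is integrable on $(0,\infty)$ (it stays bounded near the zeros $n\pi_p$ of $\sin_p$, where $f$ vanishes, and is $\le x^{-(1+\delta)}$ for large $x$). Since $w\mapsto f(x)^w$ is entire for each fixed $x$, the dominated-convergence form of Morera's theorem makes $\Phi$ holomorphic on $H$. Iterating $\frac{\mathrm d}{\mathrm d w}f(x)^w=(\ln f(x))\,f(x)^w$ then gives
\[
\Phi^{(n)}(w)=\int_0^\infty (\ln f(x))^n\, f(x)^w\,\mathrm{d}x=\varphi_p(n,w),
\]
so in particular $\Phi^{(n)}(q)=\varphi_p(n,q)$, which is the relation $\varphi_p^{(n)}(q)=\varphi_p(n,q)$ recorded in the statement.

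Finally, because $\Phi$ is holomorphic on the half-plane $H$, its Taylor series about the real point $q>1$ converges on the largest open disc centred at $q$ and contained in $H$, namely $\{|w-q|<q-1\}$, since the distance from $q$ to the boundary line $\operatorname{Re}w=1$ is exactly $q-1$. Hence
\[
\Phi(w)=\sum_{n=0}^\infty \frac{\Phi^{(n)}(q)}{n!}(w-q)^n,\qquad |w-q|<q-1.
\]
Putting $w=q-z$ with real $z$ satisfying $1-q<z<q-1$ gives $w-q=-z$ and $\Phi(q-z)=\varphi_p(q-z)$, whence
\[
\varphi_p(q-z)=\sum_{n=0}^\infty \frac{\varphi_p(n,q)}{n!}(-z)^n=\sum_{n=0}^\infty (-1)^n \varphi_p(n,q)\,\frac{z^n}{n!},
\]
as required.

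I expect the main obstacle to be justifying the termwise differentiation that yields $\Phi^{(n)}=\varphi_p(n,\cdot)$, since the factor $(\ln f(x))^n$ is unbounded both at the zeros $n\pi_p$ of $\sin_p$ and as $x\to\infty$. The resolution is to produce a dominating function uniform on $K$: for any $\epsilon\in(0,\delta)$ the map $t\mapsto t^{\epsilon}|\ln t|^n$ is bounded on $(0,1]$ by some constant $C_{n,\epsilon}$ (it tends to $0$ at both endpoints), so
\[
\left|(\ln f(x))^n f(x)^w\right| = |\ln f(x)|^n\, f(x)^{\operatorname{Re}w}\le C_{n,\epsilon}\, f(x)^{\operatorname{Re}w-\epsilon}\le C_{n,\epsilon}\, f(x)^{1+\delta-\epsilon},
\]
and the last bound is integrable because $1+\delta-\epsilon>1$. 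This uniform control, which is precisely the $L_q$-integrability of the $\sinc_p$ powers, legitimizes each interchange of differentiation and integration, and everything else is standard complex analysis.
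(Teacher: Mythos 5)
Your proof is correct, and it follows essentially the route the paper intends: the authors omit the proof, noting only that it is a natural adaptation of the argument in \cite{borwein2010p} and that it "makes use of the $L_q$-Lebesgue integrability of the $\sinc_p$ functions" --- which is exactly the holomorphic-extension-plus-dominated-convergence argument you give, including the key domination $|\ln f(x)|^n f(x)^{\operatorname{Re}w}\le C_{n,\epsilon}f(x)^{1+\delta-\epsilon}$ handling the zeros of $\sin_p$ and the decay at infinity. The identification of the radius of convergence with the distance $q-1$ to the line $\operatorname{Re}w=1$ correctly yields the stated range $1-q<z<q-1$.
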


%%%%%%%%%%%%%%%%%%%%%%%
%%%%%%%%%%%%%%%%%%%%%%%%%%%%%%%%
%\bibliographystyle{siam}
%\bibliography{Bibliography}

\end{document}